\def\dom{\texttt{Dom}}
\def\part{\texttt{Parts}}
\def\tvar{\texttt{TVar}}
\def\pvar{\texttt{PVar}}
\def\aff{\texttt{Aff}}
\def\ass{\mathcal A}
\def\prec{\texttt{prec}}
\def\post{\texttt{post}}
\newcommand{\tuple}[1]{\vec{#1}}
\newcommand {\indep}[3] {#2 ~\bot_{#1}~ #3}
\newtheorem{theo}{Theorem}[section]
\newtheorem{defin}[theo]{Definition}
\begin{document}

\title*{Dynamic Logics of Imperfect Information: From Teams and Games to Transitions}
\author{Pietro Galliani}
\institute{Pietro Galliani \at University of Sussex, \email{P.Galliani@sussex.ac.uk}}
%
%
\maketitle

\abstract{We introduce a new semantical formalism for logics of imperfect information, based on Game Logic (and, in particular, on van Benthem, Ghosh and Lu's \emph{Concurrent Dynamic Game Logic}). This new kind of semantics combines aspects from game theoretic semantics and from team semantics, and demonstrates how logics of imperfect information can be seen as languages for \emph{reasoning about games}. 
Finally we show that, for a very expressive fragment of our language, a simpler semantics is available.
}

\section{Introduction}
The interplay between \emph{game theoretic semantics} \citep{hintikka83,hintikkasandu97} and \emph{team semantics} \citep{hodges97} is one of the most distinctive phenomena of the field of \emph{logics of imperfect information}.\footnote{In this work, we will not describe in much detail the definitions of the logics of imperfect information nor the motivations which led to their their development. The interested reader who is not familiar with this field of research is referred to \citep{vaananen07b} and \citep{mann11} for a thorough introduction to the area.}

In brief, a game-theoretic semantics defines, for any suitable model $M$ and sentence $\phi$ of its language, a \emph{semantic game} $G^M(\phi)$. Truth and falsity are the defined in terms of properties of this games - usually in terms of the existence of winning strategies for a designated player.\footnote{There are exceptions, however: for example, \citep{sevenster09} and \citep{galliani08} present two, not entirely equivalent, varieties of game-theoretic semantics based on the concept of \emph{Nash Equilibria} for undetermined games.} For example, given a first-order model $M$, the first-order sentence $\exists x \forall y (x = y)$ corresponds to the game in which first the existential player (also called \emph{Eloise}) selects an element $m_x \in \dom(M)$, and afterward the universal player (also called \emph{Abelard}, or \emph{Nature}) selects an element $m_y \in \dom(M)$. The play is then won by Eloise if $m_x = m_y$, and by Abelard otherwise; and, from the fact that Eloise has a winning strategy for this game in any first-order model with at most one element, and Abelard has a winning strategy otherwise, one can conclude that 
\[
	M \models \exists x \forall y (x = y) \Leftrightarrow |\dom(M)| \leq 1.
\]
\emph{Independence-Friendly Logic} (IF-Logic) is among the most studied logics of imperfect information (\citep{hintikka96,tulenheimo09,mann11}). Its language extends the one of first-order logic by adding \emph{slashed quantifiers} $(\exists v / W) \phi(v, \ldots)$, where $W$ is a set of variables: the intended interpretation can be roughly expressed in natural language as ``There exists a $v$, \emph{chosen independently from} $W$, such that $\phi(v, \ldots)$ holds''. This can be represented formally in terms of an additional requirement over the \emph{Skolem function} corresponding to the slashed variable; but, and this is perhaps more in keeping with the informal description of the meaning of a slashed quantifier, the corresponding rule of game-theoretic semantics can be given as
\begin{itemize}
\item If the current subformula is $(\exists x /W) \psi$ and the current assignment is $s$, the existential player picks a value $m \in \dom(M)$. The next subformula is then $\psi$, and the next assignment is $s[m/x]$. 

\emph{The existential player must select the same value for }$x$\emph{ for any two assignments which are indistinguishable modulo }$W$.
\end{itemize}
The second part of this rule is a \emph{uniformity condition} over the strategies of the existential player, and it is the only aspect with respect to which this rule differs from the one for the usual, non-slashed existential quantifier. Because of it, the semantic games corresponding to IF-Logic formulas are, in general, \emph{games of imperfect information} \citep{osborne94}. The class of the semantic games corresponding to IF-Logic formulas is a very natural generalization of the one of those corresponding to First-Order formulas, and \citep{hintikka96} puts forward  a number of convincing arguments as for why these games, and the corresponding logics, may be deserving of investigation. 

However, the usual formulation of this semantical framework is not entirely without drawbacks. Games are complex objects, far more so than variable assignments; and, furthermore, game-theoretic semantics for IF Logic is explicitly \emph{non-compositional} and does not easily lend itself to the study of the properties of open formulas. These issues were one among the motivations for the development and the success of \emph{team semantics} \citep{hodges97}\footnote{Team semantics can also be found mentioned under the names of \emph{Hodges semantics} and of \emph{trump semantics}.}, an equivalent and compositional approach to the semantics of logics of imperfect information. 

In brief, team semantics can be seen as a generalization of Tarski's compositional semantics for First-Order Logic. The fundamental difference between Tarski semantics and team semantics is that, in the latter, satisfaction is not predicated of single assignments but instead of \emph{sets of assignments} (which, after \citep{vaananen07}, will be called \emph{teams} in this work.).

The connection between the game-theoretic semantics and the team semantics for IF-Logic is the following:
\begin{theo}
Let $M$ be a first-order model, let $\phi$ be a suitable formula and let $X$ be a team. Then $X$ satisfies $\phi$ in $M$ if and only if there exists a strategy $\tau$ for Eloise which is winning for her in $G^M(\phi)$ and for all initial assignments $s \in X$.
\end{theo}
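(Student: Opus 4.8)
The natural approach is to argue by induction on the syntactic structure of $\phi$, which I assume to be written in negation normal form so that negation is applied only to atomic formulas. The key structural observation, which I would isolate first, is a \emph{decomposition principle} for strategies: a strategy $\tau$ for Eloise in $G^M(\phi)$ is determined by the move or moves it prescribes at the root position together with the residual strategies it induces on each immediate subgame, and each such residual is itself a legal (in particular, suitably uniform) strategy for the corresponding subformula. The whole proof then reduces, connective by connective, to checking that the root move of $\tau$ encodes exactly the semantic witness demanded by the team-semantic clause for that connective, while the residual strategies are governed by the induction hypothesis. Both directions of the biconditional are established together in each case, by reading a witness off a given winning strategy and, conversely, by assembling a winning strategy from the team-semantic witness.

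For the base case, if $\phi$ is a literal then $G^M(\phi)$ has no moves and Eloise wins from a starting assignment $s$ exactly when $s$ satisfies the literal in the ordinary Tarskian sense; hence a strategy winning from every $s \in X$ exists if and only if every $s \in X$ satisfies the literal, which is precisely the team-semantic clause. For the cases in which Abelard chooses—conjunction $\psi \land \theta$ and the universal quantifier $(\forall x / W)\psi$—Eloise's strategy must succeed against every move Abelard can make. For the conjunction this forces the residual strategy to win from $X$ in both $G^M(\psi)$ and $G^M(\theta)$, which by the induction hypothesis is equivalent to $X \models_M \psi$ and $X \models_M \theta$. For the universal quantifier the residual strategy must win from every $s[m/x]$ with $s \in X$ and $m \in \dom(M)$, i.e. from the whole supplemented team $X[M/x]$, which by the induction hypothesis matches the clause $X[M/x] \models_M \psi$. (A slash on a universal quantifier only restricts Abelard and hence makes Eloise's task easier, so it leaves this clause unchanged.)

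The Eloise-move cases carry the content of the theorem. For a disjunction $\psi \lor \theta$, Eloise's root move sorts each reachable assignment into the left or the right subgame; writing $Y$ and $Z$ for the assignments of $X$ sent left and right, the winning condition forces the residuals to win from $Y$ in $G^M(\psi)$ and from $Z$ in $G^M(\theta)$, which by the induction hypothesis is $Y \models_M \psi$ and $Z \models_M \theta$ with $X = Y \cup Z$—exactly the team-semantic split. For the slashed existential quantifier $(\exists x / W)\psi$, Eloise's root move is a function $F$ assigning to each reachable $s$ the value $F(s) \in \dom(M)$ she plays for $x$; the residual must then win from every $s[F(s)/x]$ with $s \in X$, i.e. from the team $X[F/x]$, giving $X[F/x] \models_M \psi$ by the induction hypothesis.

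The main obstacle, and the step I would treat most carefully, is the \emph{uniformity bookkeeping} in the existential and slashed-disjunction cases. The imperfect-information constraint on $\tau$ requires Eloise to play identically at any two positions agreeing modulo $W$; I must verify that this is exactly the condition that the induced function $F$ (or the induced split) be constant on assignments agreeing outside $W$, so that $F$ is a legitimate team-semantic witness, and conversely that a uniform $F$ yields a uniform strategy. A related delicate point is that decomposing a single \emph{global} strategy into residuals must respect information sets: the indistinguishability relation on each subgame has to be the one inherited from $G^M(\phi)$, so that a uniform strategy restricts to a uniform strategy and uniform residuals reassemble into a uniform whole. Once this correspondence between the uniformity of strategies and the dependence conditions built into the team-semantic clauses is pinned down, the remaining verifications are routine applications of the induction hypothesis.
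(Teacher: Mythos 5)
You cannot be checked against the paper's own proof here, because the paper does not contain one: this theorem is quoted in the introduction as the known bridge between game-theoretic semantics and team semantics for IF logic (essentially Hodges' theorem), and the paper defers its proof to the literature it cites (Hodges 1997, V\"a\"an\"anen 2007, Mann et al.\ 2011); the paper's own technical results (the transition-semantics theorem of Section 3) concern a different formalism. Judged on its own merits, your proposal is the standard argument from that literature and is essentially sound. The three ingredients you isolate are exactly the right ones: induction on formulas with the team as a parameter, decomposition of a strategy into a root move plus residual strategies, and identification of Eloise's root move with the team-semantic witness (the split $X = Y \cup Z$ for disjunction, the function $F$ with supplemented team $X[F/x]$ for the existential quantifier), with the uniformity constraint on strategies corresponding literally to $W$-independence of $F$. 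Your worry about information sets being inherited correctly by subgames is also resolved in the way you hope: since the paper states the uniformity condition on \emph{assignments} rather than on histories (``the same value for $x$ for any two assignments which are indistinguishable modulo $W$''), strategies factor through assignments, and the indistinguishability relation at a subformula occurrence is the same whether that occurrence is viewed inside $G^M(\phi)$ or as the root of its own game; this is what makes your decomposition principle true.

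Two points you gloss over deserve explicit treatment if you write this out. First, in the disjunction case the team-semantic witness $X = Y \cup Z$ may have $Y$ and $Z$ overlapping, whereas a strategy induces a partition of $X$; the repair is not downward closure but the trivial observation on the game side that a strategy winning from every $s \in Z$ is in particular winning from every $s \in Z \backslash Y$, so the witness can be disjointified before assembling the strategy. Second, your parenthetical about slashed universal quantifiers (``only restricts Abelard and hence makes Eloise's task easier, so it leaves this clause unchanged'') is a non sequitur as stated: easier would mean the clause could \emph{weaken}. The correct justification is that winning for Eloise is defined against all sequences of Abelard moves (equivalently, every single play is consistent with some uniform Abelard strategy, since a play meets each information set at most once), so the slash on a universal quantifier is simply invisible to Eloise's winning condition, and the clause for $(\forall x / W)\psi$ coincides with that for $\forall x\, \psi$. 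With those two points pinned down, your induction goes through.
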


Team semantics proved itself an extremely fruitful approach to the study of logics of imperfect information. In particular, its discovery was fundamental for the development of \emph{Dependence Logic}, a logical formalism which is expressively equivalent to IF-Logic\footnote{Strictly speaking, this is the case only with respect to sentences. With respect to open formulas, this is true only if the domain of the team is presumed finite and fixed.} and which separates the notion of dependence/independence from the notion of quantification by eschewing slashed quantifiers in favor of \emph{dependence atoms} $=\!\!(t_1 \ldots t_n)$, whose informal meaning can be stated as ``The value of the term $t_n$ is a function of the values of the terms $t_1 \ldots t_{n-1}$.'' 

Dependence Logic, in turn, was found to be an ideal ground for the discovery of a number of meta-logical results, especially in the area of \emph{finite model theory}.\footnote{We will not attempt to give here a summary of these results; apart from \citep{vaananen07}, the interested reader could refer for example to \citep{kontinenv09,kontinennu09,kontinenv10,kontinen_ja10} and \cite{durand11}. But this brief list far from complete.}

Another direction of research that saw a good amount of recent interest consists in the study of logics of imperfect information obtained by adding to the language of first-order logic atoms corresponding to \emph{non-functional} notions of dependence, such as \emph{Independence Logic} \citep{gradel10}, \emph{Multivalued Dependence Logic} \citep{engstrom10}, and \emph{Inclusion/Exclusion Logic} \citep{galliani12}. The analysis of the relationship between such logics, in particular, is (at least, in the opinion of the author) a promising and, to this moment, largely unexplored subject.\footnote{\citep{galliani12} contains a few basic results along these lines, as does \citep{galliani11b}.}

Despite all of this, team semantics is not entirely without drawbacks either. Its formal definition - formally elegant though it may be - obfuscates somewhat the natural intuitions evident in the underlying game theoretic semantics. Why are the rules for the various connectives of our logics in team semantics the ones that they are? Why do we have \emph{these} connectives, instead of the many others that could be defined in such a framework? Even more importantly, what is the \emph{meaning} of the statement according to which a certain team $X$ satisfies a formula $\phi$, and under which circumstances could one be interested in whether this statement is true or false?

This is of particular relevance for those who, like the author, are interested in the study of extensions and variants of Dependence Logic: without a thorough, exact understanding of the notion of satisfiability in team semantics and of its interpretation, it would not be at all clear whether a certain system of semantic rules - natural though it may appear from a purely formal point of view - holds any logical significance or not.

The present work can be seen as an attempt to find a partial answer to these issues. We will develop a semantics for logics of imperfect information which can be seen as a generalization of \emph{both} game-theoretic semantics and team semantics, and in which formulas have a natural interpretation in terms of either specifications of games or assertions about the properties of games. The resulting formalism can also be seen as an imperfect-information, first order variant of \emph{Game Logic} \citep{pauly03}; and, in fact, it is the hope of the author that the framework described here will prove itself a viable starting point for the establishment of closer relationships between these two research areas.

These notes contain little in terms of new results. This is due to their purpose: rather than proving complex metalogical results about an established formalism, we tried here to present, in as direct a way as we could, a novel way of considering logics of imperfect information and their role.

We finish this introduction by mentioning a number of works on whose results these notes are based. The field of \emph{dynamic semantics}, and in particular \citep{groenendijk91}, was the original source of inspiration for much of the machinery developed here. Furthermore, \citep{bradfield00} provided many of the insights upon which this work is based, and our compositional treatment of games is inspired by \citep{abramsky07}. But from a technical point of view, the formalism which we will present here is a direct descendant of van Benthem, Ghosh and Lu's \emph{Concurrent Dynamic Game Logic} \citep{vanbenthem08b}, and it resembles in particular the ``richer collective modal language'' briefly mentioned in section 4.2. of their insightful paper.

Another work, even more closely related to the present one, is the \emph{Transition Dependence Logic} discussed in Section 3.2 of \citep{galliani2014transition}. However, the semantics which will be discussed in the present paper are more general: indeed, the main purpose of \citep{galliani2014transition} is to illustrate how the usual semantics for Dependence Logic can be interpreted in terms of transitions and assertions about reachability in games of imperfect information, whereas in the present work we will investigate how Game Logic may be extended through notions from Dependence Logic. 

Together, these two works can be considered two attempts -- operating respectively from the ``Game Logic'' side towards the ``Dependence Logic'' one and vice versa -- to highlight the profound similarities and connections between these two largely independent areas of logical investigation and to suggest ways in which either could benefit from the other.
\section{Dynamic Logics of Imperfect Information}
In this section, we will introduce our basic formalism and discuss some of its possible extensions. 
\subsection{Teams, Transitions and Games}
\label{subsect:games}
\begin{defin}[Team variables]
Let $\tvar$ be a nonempty, fixed, not necessarily infinite set of symbols $v_1, v_2, \ldots$ If $v \in \tvar$, we will say that $v$ is a \emph{team variable}.\footnote{Or, simply, a \emph{variable}.}

Let $M$ be a first order model. Then $\ass_M = \dom(M)^{\tvar}$ is the set of all team variable assignments over $M$ with variables in $\tvar$.
\end{defin}
\begin{defin}[Team]
Let $M$ be a first order model. A \emph{team} over it is a subset of $\ass_M$, that is, a set of assignments over $M$.
\end{defin}
\begin{defin}[Transition]
Let $M$ be a first order model. A \emph{transition} on $M$ is a partial function $\tau$ from $\ass_M$ to $\part(\ass_M) \backslash \{\emptyset\}$. 

The domain of $\tau$ is also called its \emph{precondition} $\prec(\tau)$; and its range is also called its \emph{postcondition} $\post(\tau)$. If $X$ is the precondition of $\tau$ and $Y$ is its postcondition, we will also write $\tau:X \rightarrow Y$.
\end{defin}
Note that we require here that, under a well-defined transition, an assignment $s$ in its precondition always has at least one successor $s' \in \tau(s)$.

Another of our main semantic concepts will be the notion of \emph{game}. There exist in the literature of number of different definitions of game, all with their advantages and drawbacks. The usual choice in game-theoretic semantics is to deal with games in \emph{extensive form}, or, more rarely, with games in \emph{strategic form}. Here, however, we will adopt a different option:
\begin{defin}[Games in Transition Set Form]
Let $M$ be a first order model. A first-order game in \emph{transition set form} over $M$ is a nonempty set of transitions on $M$. A \emph{strategy} for the existential player in a game $G = \{\tau_1, \tau_2, \ldots\}$ is simply a transition $\tau_i \in G$.
\end{defin}
A transition for a game describes a \emph{commitment} of our existential players: if $\tau(s) = X$, then the existential player can \emph{ensure} that, if the initial assignment is $s$, the terminal assignment of the game will be in $X$. A choice of a strategy for Eloise in this game, therefore, specifies a set of \emph{conditions} concerning the relationship between the initial and terminal assignments of the game, as well as a \emph{belief state} (the precondition of the transition) under which the strategy is available and these conditions may be imposed. Abelard's strategies may then be seen as \emph{choice functions} $\sigma$ selecting, for any $\tau$ and any assignment $s \in \prec(\tau)$, a specific \emph{outcome} $\sigma(\tau, s) \in \tau(s)$; and hence, the postcondition $\post(\tau)$ of $\tau$ describes the belief state of the existential player about the \emph{outcome} of the game when $\tau$ is played starting from an unknown belief state in $\prec(\tau)$. 

Let us introduce a couple of simple operations between transitions: 
\begin{defin}[Concatenation of Transitions]
Let $\tau$ and $\tau'$ be two transitions with $\post(\tau) = \prec(\tau')$. Then $\tau \circ \tau': \prec(\tau) \rightarrow \post(\tau')$ is the transition defined by 
\[
	(\tau \circ \tau')(s) = \{\tau'(s') : s' \in \tau(s)\}.
\]
\end{defin}
\begin{defin}[Union of Transitions]
Let $\tau_0$ and $\tau_1$ two transitions. Then $\tau_0 \cup \tau_1: \prec(\tau_0) \cup \prec(\tau_1) \rightarrow \post(\tau_0) \cup \post(\tau_1)$ is the transition such that 
\[
	(\tau_0 \cup \tau_1)(s) = \left\{\begin{array}{l l}
			\tau_0(s) & \mbox{ if } s \in \prec(\tau_0) \backslash \prec(\tau_1);\\
			\tau_1(s) & \mbox{ if } s \in \prec(\tau_1) \backslash \prec(\tau_0);\\
			\tau_0(s) \cup \tau_1(s) & \mbox{ if } s \in \prec(\tau_0) \cap \prec(\tau_1)
		\end{array}
	\right.
\]
for all $s \in \prec(\tau_0) \cup \prec(\tau_1)$.
\end{defin}

Using these transitions, we can define a couple of operations over games. 
\begin{defin}[Concatenation of Games]
Let $G$ and $G'$ be games over the same model $M$. Then $G;G'$ is the game containing $\tau \circ \tau'$ for all $\tau \in G$ and all $\tau' \in G'$ with $\post(\tau) = \prec(\tau')$.
\end{defin}
\begin{defin}[Choice between games]
Let $G_0$ and $G_1$ be games over the same model $M$. Then $G \cup G'$ is the game containing $\tau_0 \cup \tau_1$ for every $\tau_0 \in G_0$ and $\tau_1 \in G_1$.
\end{defin}
These operations on games are by no means the only ones that can be studied in this framework. For example, we could consider a \emph{dualization} operator $G^d$, which interchanges the roles of the two players; a \emph{revealing operation} $R(G) = (G^d)^d$, which allows the second player to choose his strategy as a function of the assignment $s$;\footnote{That $R(G) = (G^d)^d$ would then follow at once from the fact that, in our formalism, the second player -- representing the environment -- has no knowledge restrictions.} a \emph{parallel composition} operator $G_1 || G_2$; an \emph{iteration} operator $G^*$; and so on.

Some of these possibilities will be explored later; but first, let us see what we can do with what we already defined.
\subsection{First-Order Concurrent Dynamic Game Logic with Imperfect Information}
\begin{defin}[Parameter Variables]
Let $\pvar = \{p_1, p_2, \ldots\}$ be a fixed, countably infinite set of symbols which is disjoint from $\tvar$ and from the other symbols of our language. We will call it the set of the \emph{parameter variables}.
\end{defin}

\begin{defin}[Syntax]
Let $\Sigma$ be a first-order signature. 

The \emph{game formulas} of our logic are defined as 
\[
	\gamma ::= \epsilon ~|~ \sharp v ~|~ ! v ~|~ \gamma ; \gamma ~|~ \gamma \cup \gamma ~|~ \phi?
\]
where $v$ ranges over $\tvar$ and where $\phi$ is a \emph{belief formula}.

The \emph{belief formulas} of our logic are defined as 
\[
	\phi := \top ~|~ R\tuple t ~|~ \lnot R \tuple t ~|~ t = t' ~|~ t \not = t' ~|~ \sim \phi ~|~ \exists p \phi ~|~ \phi \vee \phi ~|~ \langle \gamma \rangle \phi  
\]
where $R$ ranges over all predicate symbols of our signature, $\tuple t$ ranges over all tuples of terms in our signature of the required lengths and with variables in $\tvar \cup \pvar$, $v$ ranges over $\pvar$ and $\gamma$ ranges over all game formulas.
\end{defin}
We will define the semantics only with respect to formulas with no free parameter variables; the extension to the case in which free parameter variables occur could be done in the straightforward way, by considering \emph{parameter variable assignments}, but for simplicity reasons we will not treat it in this work.
\begin{defin}[Semantics]
Let $M$ be a first order model and let $\gamma$ be a game formula with no free parameter variables. Then $\|\gamma\|_M$ is a game on $M$, defined inductively as follows:
\begin{enumerate}
\item For all teams $X$, $\tau_{\epsilon, X}: X \rightarrow X \in \|\epsilon\|_M$, where $\tau_{\epsilon, X}(s) = \{s\}$ for all $s \in X$;
\item For all teams $X$, all variable symbols $v$ and all functions $F$ from $X$ to $\part(\dom(M)) \backslash \emptyset$, $\tau_{F, X}: X \rightarrow X[F/v] \in \|\sharp v\|_M$, where $\tau(s) = s[F/v] = \{s[m/v] : m \in F(s)\}$ and $X[F/v] = \bigcup \{s[F/v] : s \in X\}$;
\item For all teams $X$ and all variable symbols $v$, $\tau_{\forall v, X}: X \rightarrow X[M/v] \in \|! v\|_M$, where $\tau_{\forall v, X}(s) = s[M/v] = \{s[m/v] : m \in \dom(M)\}$ and\\ $X[M/v] = \bigcup\{s[M/v] : s \in X\}$;
\item If $\tau: X \rightarrow Y \in \|\gamma\|_M$ and $\tau': Y \rightarrow Z \in \|\gamma'\|_M$ then $\tau \circ \tau': X \rightarrow Z \in \|\gamma;\gamma'\|_M$;
\item If $\tau_0: X_0 \rightarrow Y_0 \in \|\gamma_0\|_M$ and $\tau_1: X_1 \rightarrow Y_1 \in \|\gamma_1\|_M$ then $\tau_0 \cup \tau_1: X_0 \cup X_1 \rightarrow Y_0 \cup Y_1 \in \|\gamma_0 \cup \gamma_1\|_M$; 
\item If $X \in \|\phi\|_M$ then $\tau_X : X \rightarrow X \in \|\phi?\|_M$, where $\tau_X(s) = \{s\}$ for all $s \in X$.
\end{enumerate}
If $\phi$ is a belief formula then $\|\phi\|_M$ is instead a set of teams, defined as follows: 
\begin{enumerate}
\item $X \in \|\top\|_M$ for all teams $X$;
\item $X \in \|R\tuple t\|_M$ if and only if $X \subseteq R^M$;
\item $X \in \|\lnot R\tuple t\|_M$ if and only if $X \cap R^M = \emptyset$;
\item $X \in \|t = t'\|_M$ if and only if $t\langle s\rangle = t'\langle s\rangle$ for all $s \in X$;
\item $X \in \|t \not = t'\|_M$ if and only if $t\langle s\rangle \not = t'\langle s\rangle$ for all $s \in X$;
\item $X \in \|\sim \phi\|_M$ if and only if $X \not \in \|\phi\|_M$; 
\item $X \in \|\exists p \phi\|_M$ if and only if there exists an element $m \in \dom(M)$ such that $X \in \|\phi[m/p]\|_M$;\footnote{Or, to be more formal, if and only if there exists an unused constant symbol $c$ and an element $m \in \dom(M)$ such that $X \in \|\phi[c/p]\|_{M(c \mapsto m)}$.}
\item $X \in \|\phi_1 \vee \phi_2\|_M$ if and only if $X \in \|\phi_1\|_M$ or $X \in \|\phi_2\|_M$; 
\item $X \in \|\langle \gamma\rangle \phi\|_M$ if and only if there exists a team $Y$ and a $\tau: X \rightarrow Y \in \|\gamma\|_M$ such that $Y \in \|\phi\|_M$.
\end{enumerate}

If $\tau \in \|\gamma\|_M$, we will write $M \models_\tau \gamma$ and will say that $\tau$ is \emph{a strategy of} $\gamma$; and if $X \in \|\phi\|_M$, we will write $M \models_X \phi$ and we will say that $X$ \emph{satisfies} $\gamma$.

If $M \models_X \phi$ for all $X \subseteq \ass_M$, we will say that $\phi$ is \emph{true} in $M$, and we will write $M \models \phi$; and finally, if $M \models \phi$ for all first order models $M$ we will say that $\phi$ is \emph{valid}.
\end{defin}

As usual, we will write $\bot$ for $\lnot \top$, $\phi \wedge \psi$ for $\sim (\sim \phi \wedge \sim \psi)$, $\phi \rightarrow \psi$ for $\sim \phi \vee \psi$, $\phi \leftrightarrow \psi$ for $(\phi \rightarrow \psi) \wedge (\psi \rightarrow \phi)$, $\forall v \phi$ for $\lnot (\exists v \lnot \phi)$ and $[\gamma]\phi$ for $\sim \langle \gamma\rangle \sim \phi$. 

The intuition behind this logical system should be clear. Game formulas describe games: for example, $\sharp v$ is the game in which the existential player picks new values for the variables $v$, $\gamma_1 \cup \gamma_2$ is the game in which the existential player chooses whether to play $\gamma_1$ or $\gamma_2$, and so on. Belief formulas describe instead conditions over teams, that is, over \emph{belief sets}; and the connection between game formulas and belief formulas is given by the \emph{test} operation $\phi?$, which corresponds to the game that merely verifies whether the initial belief state of the existential player satisfies $\phi$, and by the modal operator $\langle \gamma \rangle \phi$, which states that the existential player can play $\gamma$ and reach a final belief state in which $\phi$ holds.\footnote{In other words, in $\langle \gamma\rangle \phi$ the belief formula $\phi$ specifies a \emph{winning condition} for the game formula $\gamma$.}
\subsection{More Constructors and Atoms}
Now that we have defined our basic framework, let us examine a few ways to extend it with some of the connectives and predicates typically studied in the framework of logics of imperfect information.
\subsubsection{Tensor}
\label{subsubsect:tensor}
The semantic rule for the disjunction of formulas that we have in our semantics is the one corresponding to the \emph{classical} disjunction: a team $X$ satisfies $\phi \vee \psi$ if and only if it satisfies $\phi$ or it satisfies $\psi$. However, in the field of logics of imperfect information there exists another, perhaps more natural form of disjunction, which arises directly from the game theoretical interpretation of the disjunction in classical logic. Following \citep{vaananen07b}, we will write it as $\phi \otimes \psi$.

Its satisfaction condition is defined as follows:
\begin{itemize}
 \item For any model $M$, team $X$ and pair of (belief) formulas $\phi$ and $\psi$, $M \models_X \phi \otimes \psi$ if and only if there exist  $Y$ and $Z$ such that $M \models_Y \phi$, $M \models_Z \psi$, and $X = Y \cup Z$.
\end{itemize}

Can we model this connective in our framework? Certainly! Indeed, we have that $\phi \otimes \psi$ is equivalent to 
\[
\langle \phi? \cup \psi?\rangle \top.
\]
This can be verified simply by unraveling our definitions: indeed, $M \models_{\tau'} \phi?$ if and only if $\tau':Y \rightarrow Y$ is such that $M \models_Y \phi$ and $\tau'$ is the identity on $Y$, $M \models_{\tau''} \psi?$ if and only if $\tau'':Z \rightarrow Z$ is such that $M \models_Z \psi$ and $\tau''$ is the identity on $Z$, and hence $\tau : X \rightarrow X$ satisfies $\phi? \cup \psi?$ if and only if it is the identity on $X$ and $X$ can be split into two subteams which satisfy $\phi$ and $\psi$ respectively.

Using the tensor operator, we can define inequalities of tuples of terms: more precisely, if $\tuple t_1$ and $\tuple t_2$ are of the same length $n$ we can define $\tuple t_1 \not = \tuple t_2$ as 
\[
\bigotimes_{i=1}^n (\tuple t_{1i} \not = \tuple t_{2i})
\]
It is then easy to see that $M \models_X \tuple t_1 \not = \tuple t_2$ if and only it $\tuple t_1$ and $\tuple t_2$ differ for all $s \in X$.
\subsubsection{The Announcement Operator}
In \citep{galliani10b}, the \emph{announcement operator} $\delta t$ was introduced and its properties were studied. For any model $M$, team $X$ and term $t$ of the signature of $M$, its satisfaction rule can be stated as
\begin{itemize}
\item $M \models_X \delta t \phi$ if and only if for all $m \in \dom(M)$, $M \models_{X_{|t = m}} \phi$, where 
\[
	X_{|t=m} = \{s \in X : t\langle s\rangle = m\}
\]
\end{itemize}
The reason why this operator is called an \emph{announcement operator} should be clear: if a team $X$ represents a belief state of an agent, asserting that $M \models_X \delta t \phi$ is equivalent to stating that if the ``true'' value of the term $t$ is announced, then updating the belief set with this new information will lead to a state in which $X$ is true. 

In our framework, this operator can be simulated through quantification and the tensor: indeed, if $p$ is a parameter variable which does not occur in $t$ or in $\phi$, it is trivial to see that $\delta t \phi$ is equivalent to $\forall p (p \not = t \otimes (p = t \wedge \phi))$. 
\subsubsection{Dependencies}
As we wrote in the introduction, \emph{Dependence Logic} extends the language of first-order logic with \emph{dependence atoms} $=\!\!(t_1 \ldots t_n)$, with the intended meaning of ``The value of $t_n$ is a function of the values of $t_1 \ldots t_{n-1}$.'' Formally, this is expressed by the following satisfaction condition: 
\begin{itemize}
	\item For any first order model $M$, every $n \in \mathbb N$ and every $n$-uple of terms $t_1 \ldots t_n$, $M \models_X =\!\!(t_1 \ldots t_n)$ if and only if every $s, s' \in X$ which assign the same values to $t_1 \ldots t_{n-1}$ also assign the same value to $t_n$.
\end{itemize}
These atoms can be easily represented in our formalism. 

First, let us consider the case of the \emph{constancy atoms} $=\!\!(t)$, which - according to the above condition - are satisfied in a team $X$ if and only if the value of $t$ is the same for all the assignments in $X$. 

Clearly, this atom can be defined in terms of our existential quantifier: more precisely, if $p$ is a variable which does not occur in $t$ then $=\!\!(t)$ is equivalent to $\exists p (t = p)$.

Furthermore, we can decompose the dependence atom into announcement operators and constancy atoms: indeed, as mentioned in \citep{galliani10b}, $=\!\!(t_1 \ldots t_n)$ is equivalent to $\delta t_1 \ldots \delta_{t_{n-1}} =\!\!(t_n)$. Therefore all dependence atoms are expressible in our formalism.

Some recent work examined the logics obtained by adding to first-order logic atoms corresponding to non-functional notion of dependencies. We will examine three such dependencies here, and see how all of them can be defined using the language of our logic: 
\begin{description}
\item[Inclusion Atoms (\citep{galliani12}):] For all tuples of terms $\tuple t_1$ and $\tuple t_2$, of the same length, all models $M$ and all teams $X$, $M \models_X \tuple t_1 \subseteq \tuple t_2$ if and only if for all $s \in X$ there exists a $s' \in X$ with $\tuple t_1\langle s\rangle = \tuple t_2\langle s'\rangle$. Let $n = |\tuple t_1| = |\tuple t_2|$, and let $\tuple p$ be a $n$-uple of parameter variables not occurring in $\tuple t_1$ and in $\tuple t_2$: then $\tuple t_1 \subseteq \tuple t_2$ can be seen to be equivalent to 
\[
	\forall \tuple p(\tuple p \not = \tuple t_2 \rightarrow \tuple p \not = \tuple t_1).
\]
\item[Exclusion Atoms (\citep{galliani12}):] For all tuples $\tuple t_1$ and $\tuple t_2$ of the same length, all models $M$ and all teams $X$, $M \models_X \tuple t_1 ~|~ \tuple t_2$ if and only if $\tuple t_1\langle s\rangle \not = \tuple t_2\langle s'\rangle$ for all $s, s' \in X$. Hence, a team $X$ satisfies $\tuple t_1 ~|~ \tuple t_2$ if and only if it satisfies 
\[
	\forall \tuple p (\tuple p \not = \tuple t_1 \vee \tuple p \not = \tuple t_2)
\]
where, once again, $\tuple p$ is a tuple of fresh parameter variables.
\item[Independence Atoms (\citep{gradel10}):] Let $\tuple t_1$, $\tuple t_2$ and $\tuple t_2$ be three tuples of terms, not necessarily of the same length. Then, for all models $M$ over a suitable signature and for all teams $X$, $M \models_X \indep{\tuple t_1}{\tuple t_2}{\tuple t_3}$ if and only if for all $s, s' \in X$ with $\tuple t_1 \langle s \rangle = \tuple t_1\langle s'\rangle$ there exists a $s'' \in X$ such that $\tuple t_1 \tuple t_2\langle s\rangle = \tuple t_1 \tuple t_2\langle s''\rangle$ and $\tuple t_1 \tuple t_3\langle s'\rangle = \tuple t_1 \tuple t_3\langle s''\rangle$.

If $\tuple p_2$ and $\tuple p_3$ are disjoint tuples of fresh parameter variables of the required arities then we can express this condition as
\[
	\delta \tuple t_1 \forall \tuple p_2 \forall \tuple p_3 (\tuple t_2 \tuple t_3 \not = \tuple p_2 \tuple p_3 \rightarrow (\tuple t_2 \not = \tuple p_2 \vee \tuple t_3 \not = \tuple p_3)).
\]
\end{description}
\subsubsection{Variable Hiding}
As mentioned in the introduction, IF-Logic adds to the language of first-order logic \emph{slashed quantifiers} $(\exists v / W) \phi$, where $W$ is a set of formulas, with the intended meaning of ``there exists a $v$, \emph{chosen independently from} $v$, such that $\phi$''. 

In our framework, it is possible to detach this notion of \emph{variable hiding} from the very notion of quantification as follows: 
\begin{defin}[Independent Transitions]
Let $M$ be a first order model, let $W \subseteq \tvar$ be a set of team variables, and let $\tau: X \rightarrow Y$ be a transition. We say that $\tau$ is \emph{independent on} $W$ if and only if 
\[
	s \equiv_W s' \Rightarrow \tau(s) = \tau(s')
\]
for all $s, s' \in X$, where $s \equiv_W s'$ if and only if $s(v) = s'(v)$ for all $v \in \tvar \backslash W$.
\end{defin}
\begin{defin}[Variable Hiding]
Let $G = \{\tau_1, \tau_2, \ldots\}$ be a game in transition form over a first order model $M$, and let $W$ be a set of variables. Then $(G / W)$ is the game defined as 
\[
	(G / W) = \{\tau_i : \tau_i \in G, \tau_i \emph{ independent on } W\}.
\]
\end{defin}
Hence, for every game formula $\gamma$ and for every $W$ we can now define the game formula $\gamma / W$, whose interpretation in a model $M$ is given by 
\[
	G^M(\gamma / W) = (G^M(\gamma) / W).
\]
In the terms in which we formulated our semantics, this corresponds to the following rule: 
\begin{itemize}
\item For all $M$, $\tau$ and $\gamma$,  $\tau \in \|\gamma / W\|_M$ if and only if $\tau \in \|\gamma\|_M$ and $\tau$ is independent on $W$.
\end{itemize}
It is trivial to see that this definition gives to the game formula $\sharp v/W$ the same interpretation of the IF Logic quantifier $\exists v / W$.
\subsection{Iteration}
Another of the operations typically considered in Game Logic is \emph{iteration}: in brief, given a game $\gamma$, the game $\gamma^*$ is the one in which $\gamma$ is played zero, one or more times, and the existential player chooses when to exit the ``loop''.

This can be described in our framework as follows: 
\begin{itemize}
\item For all $M$, all game formulas $\gamma$ and all transitions $\tau: X \rightarrow Y$, $M \models_\tau \gamma$ if and only if there exists a $n \in \mathbb N$ and strategies $\tau_0 \ldots \tau_{n-1}$ such that $M \models_{\tau_i} \gamma$ for all $i$ and $\tau = \tau_{\epsilon, X} \circ \tau_0 \circ \ldots \circ \tau_{n-1}$.
\end{itemize}
\subsection{Intuitionistic Implication}
The \emph{intuitionistic implication} was defined in \citep{abramsky08} as follows: 
\begin{itemize}
\item For all $M$ and all belief formulas $\phi_1$ and $\phi_2$, $M \models_X \phi_1 \hookrightarrow \phi_2$ if and only if for every $Y \subseteq X$ such that $M \models_Y \phi_1$ it holds that $M \models_Y \phi_2$.
\end{itemize}
The properties of this operator were then studied in \citep{yang10}, where it was shown that the expressive power of dependence logic augmented with this operator is expressively equivalent to full second order logic. 

Nothing, in principle, prevents us from adding this operation directly to our language. 

However, it is perhaps more interesting to consider an idea mentioned (not in relation to intuitionistic implication) in \citep{vanbenthem08b} and define first the \emph{inclusion operator} $\langle \subseteq\rangle \phi$, where $\phi$ is a belief formula, whose satisfaction condition is given by 
\begin{itemize}
\item For all models $M$ and teams $X$, $M \models_X \langle \subseteq \rangle \phi$ if and only if there exists a $Y \subseteq X$ such that $M \models_Y \phi$. 
\end{itemize}
As usual, we can define $[\subseteq] \phi$ as $\sim( \langle \subseteq\rangle (\sim \phi))$; and it is now easy to see that $\phi_1 \hookrightarrow \phi_2$ is equivalent to $[\subseteq](\phi_1 \rightarrow \phi_2)$.
\subsection{Parallel Composition}
As the last connective for this work, we will now consider the \emph{parallel composition}. 

A naive implementation of such an operator stumbles into a small, quite obvious problem: how should we deal with parallel games in which both ``branches'' modify the same variable? Many possible answers, some of which rather sophisticated, have been considered in other contexts; here, however, we will favor the straightforward, if somewhat brutal, option of requiring that no variable is modified in both branches.
\begin{defin}[Affected Variables]
Let $\gamma$ be any game formula. The set $\aff(\gamma)$of its \emph{affected variables} is defined inductively as follows: 
\begin{enumerate}
\item $\aff(\epsilon) = \emptyset$; 
\item For all belief formulas $\phi$, $\aff(\phi?) = \emptyset$;
\item For all team variables $v$, $\aff(\sharp v) = \aff(!v) = \{v\}$;
\item For all game formulas $\gamma$ and $\gamma'$, $\aff(\gamma; \gamma') = \aff(\gamma \cup \gamma') = \aff(\gamma || \gamma') = \aff(\gamma) \cup \aff(\gamma')$; 
\item For all sets of variables $W$, $\aff(\gamma / W) = \aff(\gamma)$.
\end{enumerate}
\end{defin}
Given this definition, we can add the parallel composition to our syntax: 
\begin{itemize}
\item If $\gamma_1$ and $\gamma_2$ are two game formulas with $\aff(\gamma_1) \cap \aff(\gamma_2) = \emptyset$, then $\gamma_1 || \gamma_2$ is a game formula.
\end{itemize}
But how to define its semantics? 

As for the cases of sequential composition and union, it will be useful to first define the parallel composition of two transitions. 
\begin{defin}[Parallel Composition of Transitions]
Let $\tau_0: X \rightarrow Y_0$ and $\tau_1: X \rightarrow Y_1$ be two transitions, and let $\tuple v_0$, $\tuple v_1$ be two disjoint tuples of variables. Then $\tau_0(\tuple v_0||\tuple v_1) \tau_1$ is the transition given by 
\begin{align*}
	(\tau_0(\tuple v_0||\tuple v_1) \tau_1)(s) &= \{s[\tuple m_0/\tuple v_0][\tuple m_1 / \tuple v_1] : \exists s_0 \in \tau_0(s),\\
	&s_2 \in \tau_1(s) \mbox{ s.t. } m_0 = s_0(\tuple v_0) \mbox{ and } m_1 = s_1(\tuple v_1)\}.
\end{align*}
\end{defin}

At this point, defining the semantic rule for parallel composition is trivial: 
\begin{itemize}
\item For all $M$, for all $\gamma_0$ and $\gamma_1$ with $\aff(\gamma_0) \cap \aff(\gamma_1) = \emptyset$, $M \models_{\tau} \gamma_0 ||\gamma_1$ if and only if there exist $\tau_0$ and $\tau_1$ such that $M \models_{\tau_0}\gamma_0$, $M \models_{\tau_1}\gamma_1$ and $\tau = \tau_0 (\aff(\gamma_0) || \aff(\gamma_1)) \tau_1$.
\end{itemize}
In principle, nothing would prevent us from also defining a $\gamma_0 (\tuple v_0 || \tuple v_1) \gamma_1$ operator, along similar lines: such an operator would correspond to playing $\gamma_0$ and $\gamma_1$ in parallel, and at the end of the game updating the assignment according to $\gamma_0$ for the variables in $\tuple v_0$ and according to $\gamma_1$ for the variables in $\tuple v_1$.

It is worth observing that the postcondition of a parallel composition cannot be inferred by the postconditions of its components. 

For example, let $s_0$ and $s_1$ be two assignments with $s_0(v) = s_0(w) = 0$ and $s_1(v) = s_1(w) = 1$, and consider the transitions 
\begin{align*}
&\tau: \{s_0, s_1\} \rightarrow \{s_0, s_1\}, \tau(s_0) = \{s_0\}, \tau(s_1) = \{s_1\};\\
&\tau': \{s_0, s_1\} \rightarrow \{s_0, s_1\}, \tau(s_0) = \{s_1\}, \tau(s_1) = \{s_0\}.
\end{align*}
Now, $\tau$ and $\tau'$ have the same precondition and postcondition. However, it is easy to see that $\post(\tau(v|w)\tau) = \{s_0, s_1\}$, whereas $\post(\tau(v|w)\tau' = \{s_0[1/w], s_1[0/w]\}$. 

Because of this phenomenon, parallel composition - as well as the general version of our variable hiding operator - will not be treatable in the simpler semantics that we will develop in the next section.
\section{Transition Semantics}
In the semantics that we have developed so far, the interpretation of a game formula consists in the set of the transitions it allows -- or, equivalently, in the set of all the strategies available to the existential player in the corresponding game. 

It is natural, at this point, to question whether it is \emph{necessary} to carry all this information in our rules. After all, the only way in which the interpretation of a belief formula may depend on the interpretation of a game formula is if the belief formula contains a subexpression of the form $\langle \gamma \rangle \phi$; and, in this case, all that is relevant to our interpretation is the pre- and postconditions of the transitions for $\gamma$, and not the details of which initial assignments can go to which sets of final assignments.

Therefore, it is natural to consider the following, alternative semantics for our game formulas: 
\begin{defin}[Transition Semantics]
\label{defin:trasem}
Let $M$ be a first order model, let $X$ and $Y$ be teams, and let $\gamma$ be a game formula over the signature of $M$. Then we write $M \models_{X \rightarrow Y} \gamma$, and we say that $X \rightarrow Y$ is an \emph{admissible transition} for $\gamma$, if and only if there exists a $\tau \in \|\gamma\|_M$ with $\prec(\tau) = X$ and $\post(\tau) = Y$.
\end{defin}

Of course, this definition makes sense as a semantics only if game connectives are compatible with it, in the sense that the pre- and post- conditions of the strategies for a composed game are a function of the pre- and post- conditions of the strategies for its components. As we already saw, the parallel composition operator does not satisfy this condition: hence, we will exclude it from the analysis of this section.

Furthermore, the slashing operator is also incompatible with Definition \ref{defin:trasem}. Indeed, let $s_0$ and $s_1$ be two assignments with $s_0(x) = 0$, $s_1(x) = 1$, and $s_0 \equiv_x s_1$, and let $G_0 = \{\tau\}$ and $G_1 = \{\tau'\}$, where 
\begin{itemize}
\item $\tau: \{s_0, s_1\} \rightarrow \{s_0, s_1\}$, $\tau(s_0) = \{s_0, s_1\}$, $\tau(s_1) = \{s_0, s_1\}$; 
\item $\tau': \{s_0, s_1\} \rightarrow \{s_0, s_1\}$, $\tau'(s_0) = \{s_0\}$, $\tau'(s_1) = \{s_0\}$; 
\end{itemize}
From the point of view of pre- and postconditions, these two games are absolutely identical: indeed, both of them accept only $\{s_0, s_1\}$ as an initial belief state, and return it as the only possible output belief state.

However, $G_0/\{x\} = G_0$ while $G_1/W$ is empty.

For this reason, we will consider here the following subset of our language: 
\begin{defin}[Transition Logic]
\emph{Transition Logic} is the sublanguage of our formalism in which 
\begin{enumerate}
\item The parallel composition operator does not occur; 
\item The variable hiding operator may only be applied to game formulas of the form $\sharp v$.
\end{enumerate}
\end{defin}
This language is very expressive: in particular, it is easily seen to be as expressive as \emph{Team Logic} \citep{vaananen07b}, and hence, by \citep{kontinennu09}, as full Second Order Logic.

And for this sublanguage, transition semantics is indeed compositional.
\begin{theo}[Rules for Transition Semantics]
Let $M$ be a first order model and let $X$ and $Y$ be teams. Then
\begin{enumerate}
\item $M \models_{X \rightarrow Y} \epsilon$ if and only if $X = Y$; 
\item $M \models_{X \rightarrow Y} \sharp v$ if and only if there exists a $F$ such that $Y = X[F/v]$;
\item $M \models_{X \rightarrow Y} \sharp v / W$ if and only if there exists a $F$, independent on $W$, such that $Y = X[F/v]$; 
\item $M \models_{X \rightarrow Y} ~! v$ if and only if $Y = X[M/v]$; 
\item $M \models_{X \rightarrow Y} \gamma_1;\gamma_2$ if and only if there exists a $Z$ such that $M \models_{X \rightarrow Z} \gamma_1$ and $M \models_{Z \rightarrow Y} \gamma_2$;
\item $M \models_{X \rightarrow Y} \gamma_1 \cup \gamma_2$ if and only if there exist $X_1$, $X_2$, $Y_1$ and $Y_2$ such that $M \models_{X_1 \rightarrow Y_1} \gamma_1$, $M \models_{X_2 \rightarrow Y_2} \gamma_2$, $X_1 \cup X_2 = X$ and $Y_1 \cup Y_2 = Y$;
\item $M \models_{X \rightarrow Y} \phi?$ if and only if $M \models_X \phi$ and $X = Y$;
\item $M \models_{X \rightarrow Y} \gamma^*$ if and only if there exists a $n \in \mathbb N$ and $Z_1, Z_2, \ldots Z_n$ such that 
\begin{itemize}
\item $Z_1 = X$; 
\item $Z_n = Y$; 
\item For all $i=1 \ldots n-1$, $M \models_{Z_i \rightarrow Z_{i+1}} \gamma$.
\end{itemize}
\end{enumerate}

Furthermore, let $M$ be a first order model, let $\gamma$ be a game formula over it and let $\phi$ be a belief formula over it. Then, for all teams $X$, $M \models_X \langle \gamma\rangle \phi$ if and only if there exists a $Y$ such that $M \models_{X \rightarrow Y} \gamma$ and $M \models_Y \phi$.
\end{theo}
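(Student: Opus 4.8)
The plan is to prove the theorem by directly unfolding the definition of an admissible transition --- recall that $M \models_{X \rightarrow Y} \gamma$ means that some $\tau \in \|\gamma\|_M$ has $\prec(\tau) = X$ and $\post(\tau) = Y$ --- against the inductive definition of $\|\gamma\|_M$. The only genuine computations needed are two small lemmas describing how $\prec$ and $\post$ behave under the transition operations: when $\post(\tau) = \prec(\tau')$ one has $\prec(\tau \circ \tau') = \prec(\tau)$ and $\post(\tau \circ \tau') = \post(\tau')$, and for any $\tau_0, \tau_1$ one has $\prec(\tau_0 \cup \tau_1) = \prec(\tau_0) \cup \prec(\tau_1)$ and $\post(\tau_0 \cup \tau_1) = \post(\tau_0) \cup \post(\tau_1)$. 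I would establish these first, reading them straight off the definitions of concatenation and union of transitions.

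With these in hand, the base cases 1, 2, 4 and 7 are read off the corresponding clauses of the semantics of game formulas: for each team $X$ the clause exhibits exactly the transition(s) available, so the pre/post pair it produces is exactly the one asserted (e.g. for $\sharp v$ the available transitions from $X$ are the $\tau_{F,X} : X \rightarrow X[F/v]$, giving case 2). Cases 5 and 6 then follow from the two lemmas: for $\gamma_1 ; \gamma_2$ the composable witness pair forces the intermediate team $Z = \post(\tau) = \prec(\tau')$, and conversely any two witnesses matching at $Z$ compose; for $\gamma_1 \cup \gamma_2$ one decomposes a witnessing $\tau_0 \cup \tau_1$ by setting $X_i = \prec(\tau_{i-1})$ and $Y_i = \post(\tau_{i-1})$, and conversely glues two witnesses with the union operation. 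Case 8 is then an induction on the number $n$ of iterations that repeatedly invokes the concatenation rule, the base $\tau_{\epsilon,X}$ supplying $X = Y$.

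The delicate case is 3, the slashed choice $\sharp v / W$. Here I would combine case 2 with the slash rule ($\tau \in \|\sharp v / W\|_M$ iff $\tau \in \|\sharp v\|_M$ and $\tau$ is independent on $W$) and the definition of independence. The real content --- and the step I expect to be the main obstacle --- is to verify that whether $\tau_{F,X}$ is independent on $W$ is determined by $X$ and the choice function $F$ alone, so that the admissible pairs are exactly the $(X, X[F/v])$ with $F$ independent on $W$; this amounts to fixing the correct reading of ``$F$ independent on $W$'' and checking that the transition's independence is witnessed at the level of $F$. This is precisely the property that fails for arbitrary game formulas --- the paper's $G_0, G_1$ example exhibits two transitions with identical pre- and postconditions but opposite independence status --- which is exactly why the restriction to $\sharp v$ (and the exclusion of parallel composition) is imposed; so the work here is to see that the single-choice game $\sharp v$ admits no such pathology.

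Finally, the concluding equivalence for $\langle \gamma \rangle \phi$ is immediate: unfolding clause 9 of the semantics of belief formulas, $M \models_X \langle \gamma \rangle \phi$ holds iff there are $Y$ and $\tau : X \rightarrow Y \in \|\gamma\|_M$ with $M \models_Y \phi$, and by the very definition of admissible transition the existence of such a $\tau$ is just $M \models_{X \rightarrow Y} \gamma$. This is the step that justifies the whole transition-semantics approach: the interpretation of $\langle \gamma \rangle \phi$ consults only the relation $\models_{X \rightarrow Y}$, never the internal structure of the individual transitions in $\|\gamma\|_M$.
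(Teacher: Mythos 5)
Your overall strategy is the paper's own: unfold the definition of admissible transition against the inductive clauses of $\|\gamma\|_M$, using the bookkeeping facts $\prec(\tau\circ\tau')=\prec(\tau)$, $\post(\tau\circ\tau')=\post(\tau')$, $\prec(\tau_0\cup\tau_1)=\prec(\tau_0)\cup\prec(\tau_1)$ and $\post(\tau_0\cup\tau_1)=\post(\tau_0)\cup\post(\tau_1)$. The paper writes out only the sequential-composition case and declares the rest routine; your cases 1, 2, 4, 5, 6, 7, 8 and the concluding equivalence for $\langle\gamma\rangle\phi$ are exactly those routine verifications, and they are correct.

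The gap is in case 3. The claim you lean on --- that ``the single-choice game $\sharp v$ admits no such pathology'', i.e.\ that among strategies of $\sharp v$ the independence status is determined by the pre/post pair --- is false: the paper's own counterexample pair consists of strategies of $\sharp x$. Indeed, with $s_0(x)=0$, $s_1(x)=1$, $s_0$ and $s_1$ agreeing on every variable other than $x$, and $X=\{s_0,s_1\}$, the choice functions $F(s)=\{0,1\}$ and $F'(s)=\{s(x)\}$ give $\tau_{F,X},\tau_{F',X}\in\|\sharp x\|_M$ with the same precondition and postcondition $\{s_0,s_1\}$, yet $\tau_{F,X}$ is independent on $\{x\}$ while $\tau_{F',X}$ is not. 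So the step you single out as the main obstacle would fail as stated. Fortunately, no such determination is needed: both sides of case 3 are existential statements, so it suffices that an independent witness exists on one side iff one exists on the other, and transitions sharing a pre/post pair with opposite independence status are harmless. (The reason Transition Logic restricts $/W$ to $\sharp v$ is not that $\sharp v$ is pathology-free, but that $\sharp v/W$ receives its own primitive rule quantifying over $F$, rather than being computed from the admissible transitions of $\sharp v$.) One further warning on the reading of ``$F$ independent on $W$'', which you rightly flag as needing to be fixed: with the paper's definition of independence of transitions ($s\equiv_W s' \Rightarrow \tau(s)=\tau(s')$, literal equality of the sets of output assignments), the IF-style reading ``$F(s)=F(s')$ whenever $s\equiv_W s'$'' makes case 3 fail from right to left --- if $X$ contains two assignments agreeing outside $W$ but differing at some $w\in W\backslash\{v\}$, then no $\tau_{G,X}$ whatsoever is independent on $W$, although constant $F$'s satisfy the uniformity --- so the only reading under which case 3 is provable from the stated definitions is the one that defines independence of $F$ as independence of $\tau_{F,X}$, whereupon case 3 is a restatement of the slashing rule together with case 2.
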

\begin{proof}
None of the cases poses any difficulty whatsoever. As an example, we show the case of the sequential composition operator.
\begin{description}
\item[$\Rightarrow$:] Suppose that $M \models_{X \rightarrow Y} \gamma_1;\gamma_2$. Then, by definition, there exists a $\tau: X \rightarrow Y$ such that $M \models_\tau \gamma_1; \gamma_2$. But this can be the case only if there exists a $Z$, a $\tau_1: X \rightarrow Z$ and a $\tau_2:Z \rightarrow Y$ such that $M \models_{\tau_1} \gamma_1$, $M \models_{\tau_2} \gamma_2$ and $\tau = \tau_1 \circ \tau_2$. Therefore, $M \models_{X \rightarrow Z} \gamma_1$ and $M \models_{Z \rightarrow Y} \gamma_2$, as required.
\item[$\Leftarrow$:] Suppose that $M \models_{X \rightarrow Z} \gamma_1$ and $M \models_{Z \rightarrow Y} \gamma_2$. Then there exist two transitions $\tau_1: X \rightarrow Z$ and $\tau_2:Z \rightarrow Y$ such that $M \models_{\tau_1} \gamma_1$ and $M \models_{\tau_2}\gamma_2$. Hence, $M \models_{\tau_1 \circ \tau_2} \gamma_1;\gamma_2$; and finally, $M \models_{X \rightarrow Y} \gamma_1;\gamma_2$, as required.
\end{description}
\end{proof}
This framework can be extended in many different ways: for example, we could easily add new operations, such as generalized quantifiers after \citep{engstrom10}, or \emph{atomic games} from our signature, or \emph{generalized atomic formulas} as in \citep{kuusisto11}. Or we could add even more game operations, such as for example a \emph{adversaral choice} $\gamma_1 \cap \gamma_2$,\footnote{It is not difficult to see that the transition semantics for this connective would be: $M \models_{X \rightarrow Y} \gamma_1 \cap \gamma_2$ if and only if there exist $Y_1$ and $Y_2$ such that $Y_1 \cup Y_2 = Y$, $M \models_{X \rightarrow Y_1} \gamma_1$ and $M \models_{X \rightarrow Y_2} \gamma_2$.} or we could consider a multi-agent framework as in \citep{abramsky07}, or we could consider the equilibrium semantics-based variant of this formalism, or so on: in general, it appears that much of what has been done in the field of logics of imperfect information can, at least in principle, be transferred to this formalism.

In conclusion, it is the hope of the author that the above described \emph{transition semantics} may provide an useful unifying framework for a number of distinct contributions to the filed, as well as a contribution to the exploration of the relationship between logics of imperfect information and logics of games. 
\begin{acknowledgement}
The author wishes to thank Jouko V\"a\"an\"anen for a number of useful suggestions and comments about previous versions of this work. Furthermore, the author thankfully acknowledges the support of the EUROCORES LogICCC LINT programme.
\end{acknowledgement}
\bibliographystyle{plainnat}
\bibliography{biblio}
\end{document}